\newtheorem{theorem}{{\sc Theorem}}[section]
\newtheorem{corollary}[theorem]{Corollary}
\newtheorem{remark}[theorem]{Remark}
\def\XXint#1#2#3{{\setbox0=\hbox{$#1{#2#3}{\int}$ }
\vcenter{\hbox{$#2#3$ }}\kern-.6\wd0}}
\bmdefine\BGa{\alpha}
\bmdefine\BGb{\beta}
\bmdefine\BGd{\delta}
\bmdefine\BGe{\epsilon}
\bmdefine\BGve{\varepsilon}
\bmdefine\BGf{\phi}
\bmdefine\BGvf{\varphi}
\bmdefine\BGg{\gamma}
\bmdefine\BGc{\chi}
\bmdefine\BGi{\iota}
\bmdefine\BGk{\kappa}
\bmdefine\BGl{\lambda}
\bmdefine\BGn{\eta}
\bmdefine\BGm{\mu}
\bmdefine\BGv{\nu}
\bmdefine\BGp{\pi}
\bmdefine\BGth{\theta}
\bmdefine\BGvth{\vartheta}
\bmdefine\BGr{\rho}
\bmdefine\BGvr{\varrho}
\bmdefine\BGs{\sigma}
\bmdefine\BGvs{\varsigma}
\bmdefine\BGt{\tau}
\bmdefine\BGj{\tau}
\bmdefine\BGu{\upsilon}
\bmdefine\BGo{\omega}
\bmdefine\BGx{\xi}
\bmdefine\BGy{\psi}
\bmdefine\BGz{\zeta}
\bmdefine\BGD{\Delta}
\bmdefine\BGF{\Phi}
\bmdefine\BGG{\Gamma}
\bmdefine\BGL{\Lambda}
\bmdefine\BGP{\Pi}
\bmdefine\BGT{\Theta}
\bmdefine\BGS{\Sigma}
\bmdefine\BGU{\Upsilon}
\bmdefine\BGO{\Omega}
\bmdefine\BGX{\Xi}
\bmdefine\BGY{\Psi}
\bmdefine\BCA{{\mathcal A}}
\bmdefine\BCB{{\mathcal B}}
\bmdefine\BCC{{\mathcal C}}
\bmdefine\BCD{{\mathcal D}}
\bmdefine\BCE{{\mathcal E}}
\bmdefine\BCF{{\mathcal F}}
\bmdefine\BCG{{\mathcal G}}
\bmdefine\BCH{{\mathcal H}}
\bmdefine\BCI{{\mathcal I}}
\bmdefine\BCJ{{\mathcal J}}
\bmdefine\BCK{{\mathcal K}}
\bmdefine\BCL{{\mathcal L}}
\bmdefine\BCM{{\mathcal M}}
\bmdefine\BCN{{\mathcal N}}
\bmdefine\BCO{{\mathcal O}}
\bmdefine\BCP{{\mathcal P}}
\bmdefine\BCQ{{\mathcal Q}}
\bmdefine\BCR{{\mathcal R}}
\bmdefine\BCS{{\mathcal S}}
\bmdefine\BCT{{\mathcal T}}
\bmdefine\BCU{{\mathcal U}}
\bmdefine\BCV{{\mathcal V}}
\bmdefine\BCW{{\mathcal W}}
\bmdefine\BCX{{\mathcal X}}
\bmdefine\BCY{{\mathcal Y}}
\bmdefine\BCZ{{\mathcal Z}}
\bmdefine\Bzr{ 0}
\bmdefine\Ba{ a}
\bmdefine\Bb{ b}
\bmdefine\Bc{ c}
\bmdefine\Bd{ d}
\bmdefine\Be{ e}
\bmdefine\Bf{ f}
\bmdefine\Bg{ g}
\bmdefine\Bh{ h}
\bmdefine\Bi{ i}
\bmdefine\Bj{ j}
\bmdefine\Bk{ k}
\bmdefine\Bl{ l}
\bmdefine\Bm{ m}
\bmdefine\Bn{ n}
\bmdefine\Bo{ o}
\bmdefine\Bp{ p}
\bmdefine\Bq{ q}
\bmdefine\Br{ r}
\bmdefine\Bs{ s}
\bmdefine\Bt{ t}
\bmdefine\Bu{ u}
\bmdefine\Bv{ v}
\bmdefine\Bw{ w}
\bmdefine\Bx{ x}
\bmdefine\By{ y}
\bmdefine\Bz{ z}
\bmdefine\BA{ A}
\bmdefine\BB{ B}
\bmdefine\BC{ C}
\bmdefine\BD{ D}
\bmdefine\BE{ E}
\bmdefine\BF{ F}
\bmdefine\BG{ G}
\bmdefine\BH{ H}
\bmdefine\BI{ I}
\bmdefine\BJ{ J}
\bmdefine\BK{ K}
\bmdefine\BL{ L}
\bmdefine\BM{ M}
\bmdefine\BN{ N}
\bmdefine\BO{ O}
\bmdefine\BP{ P}
\bmdefine\BQ{ Q}
\bmdefine\BR{ R}
\bmdefine\BS{ S}
\bmdefine\BT{ T}
\bmdefine\BU{ U}
\bmdefine\BV{ V}
\bmdefine\BW{ W}
\bmdefine\BX{ X}
\bmdefine\BY{ Y}
\bmdefine\BZ{ Z}
\begin{document}
\title{On the fractional Korn inequality in bounded domains: Counterexamples to the case $ps<1$}
\author{Davit Harutyunyan\thanks{University of California Santa Barbara, harutyunyan@math.ucsb.edu} and 
Hayk Mikayelyan\footnote{University of Nottingham Ningbo China, Hayk.Mikayelyan@nottingham.edu.cn}}
\date{}

\maketitle

\begin{abstract}

The validity of Korn's first inequality in the fractional setting in bounded domains has been open. We resolve this problem by proving that in fact Korn's first inequality holds in the case $ps>1$ for fractional $W^{s,p}_0(\Omega)$ Sobolev fields in open and bounded $C^{1}$-regular domains $\Omega\subset \mathbb R^n$. Also, in the case $ps<1,$ for any open bounded $C^1$ domain $\Omega\subset \mathbb R^n$ we construct counterexamples to the inequality, i.e., Korn's first inequality fails to hold in bounded domains. The proof of the inequality in the case $ps>1$ follows a standard compactness approach adopted in the classical case, combined with a Hardy inequality, and a recently proven Korn second inequality by Mengesha and Scott [\textit{Commun. Math. Sci.,} Vol. 20, N0. 2, 405--423, 2022]. The counterexamples constructed in the case $ps<1$ are interpolations of a constant affine rigid motion inside the domain away from the boundary, and of the zero field close to the boundary.
\end{abstract}

\section{Introduction}
\label{sec:1}

The classical Korn's first inequality [\ref{bib:Korn.1},\ref{bib:Korn.2}] states that for any vector field $\Bu\in W^{1,p}_0(\Omega,\mathbb R^n)$ one has 
\begin{equation}
\label{1.1}
\|\nabla\Bu\|_{L^p(\Omega)}\leq C\|e(\Bu)\|_{L^p(\Omega)},
\end{equation}
where $\Omega\subset\mathbb R^n$ is an open bounded Lipschitz domain, $p\in (1,\infty),$ $C>0$ is an absolute constant, and $e(\Bu)=\frac{1}{2}(\nabla\Bu+\nabla\Bu^T)$ is the symmetric part of the gradient. It is also know that if for a field 
$\Bu\in W^{1,p}(\Omega,\mathbb R^n)$ one has $e(\Bu)=0$ a.e. in $\Omega,$ then $\Bu$ must have a constant skew-symmetric gradient 
[\ref{bib:Korn.1},\ref{bib:Korn.2},\ref{bib:Kon.Ole.}]; that is $\Bu(\Bx)=\BA \Bx+\Bb$ for some $\BA\in\mathbb R^{n\times n}$ with $\BA+\BA^T=0,$ and some constant vector $\Bb\in\mathbb R^n.$ The fractional analogue of Korn's first inequality can be formulated as follows. Assuming $s\in (0,1),$ for a vector field 
$\Bu\in L^p(\Omega,\mathbb R^n),$ the seminorm associated to the fractional 
$s-$derivative of $\Bu$ is given by 
\begin{equation}
\label{1.2}
[\Bu]_{W^{s,p}(\Omega)}^p=\int_{\Omega}\int_{\Omega}\frac{|\Bu(\Bx)-\Bu(\By)|^p}{|\Bx-\By|^{n+ps}}d\Bx d\By, 
\end{equation}
while the quantity 
$\|\Bu\|_{W^{s,p}(\Omega)}=[\Bu]_{W^{s,p}(\Omega)}+\|\Bu\|_{L^p(\Omega)}$ defines a norm in the space 
$$W^{s,p}(\Omega,\mathbb R^n)=\{\Bu\in L^p(\Omega,\mathbb R^n) \ : \ \|\Bu\|_{W^{s,p}(\Omega)}<\infty \},$$
that is a Banach space [\ref{bib:Adams}], see also [\ref{bib:Bou.Bre.Mir.1},\ref{bib:Maz.Sha.}]. It then turns out that the fractional analogue of the symmetric part of the $s-$gradient of a field $\Bu\in L^{p}(\Omega,\mathbb R^n)$ is given by (following the notation in [\ref{bib:Mengesha.1}]) 
\begin{equation}
\label{1.3}
[\Bu]_{\mathcal{X}^{s,p}(\Omega)}^p=\int_{\Omega}\int_{\Omega}\frac{|(\Bu(\Bx)-\Bu(\By))\cdot(\Bx-\By)|^p}{|\Bx-\By|^{n+ps+p}}d\Bx d\By.
\end{equation}
It is known that if for a vector field $\Bu\in L^p(\Omega,\mathbb R^n)$ one has $[\Bu]_{\mathcal{X}^{s,p}(\Omega)}=0,$ which is equivalent to $(\Bu(\Bx)-\Bu(\By))\cdot(\Bx-\By)=0$ for a.e. $\Bx,\By\in \Omega,$ then in fact again $\Bu$ has to be an affine map with a skew-symmetric gradient [\ref{bib:Tem.Mir.}, Proposition 1.2], see also [\ref{bib:Mengesha.1}, Theorem~3,1]. This observation suggests, as also noted by Mengesha [\ref{bib:Mengesha.2}] and Mengesha and Scott 
[\ref{bib:Men.Sco.1}], that the fractional analogue of Korn's first inequality should hold as well:
\begin{equation}
\label{1.4}
\int_{\Omega}\int_{\Omega}\frac{|\Bu(\Bx)-\Bu(\By)|^p}{|\Bx-\By|^{n+ps}}d\Bx d\By\leq 
C \int_{\Omega}\int_{\Omega}\frac{|(\Bu(\Bx)-\Bu(\By))\cdot(\Bx-\By)|^p}{|\Bx-\By|^{n+ps+p}}d\Bx d\By,
\end{equation}
for all vector fields $\Bu\in W^{s,p}_0(\Omega,\mathbb R^n),$ where the space $W^{s,p}_0(\Omega,\mathbb R^n)$ is the closure of $C_c^1(\Omega,\mathbb R^n)$ fields in the $W^{s,p}(\Omega)$ norm. Here the constant $C$ in (\ref{1.4}) depends only on $n,p,s$ and $\Omega.$ 
The expression in (\ref{1.3}) has arisen in the theory of linear peridynamics [\ref{bib:Silling.1},\ref{bib:Silling.2},\ref{bib:Sil.Ept.Wec.Xu.Ask.},\ref{bib:Du.Zho.},\ref{bib:Du.Men.},\ref{bib:Du.Lip.}] as the energy in the small strain regime in the analogy of linear elasticity, where the symmetric gradient $e(\Bu)$ is the linear strain, and the integral $\int_\Omega|e(\Bu)|^pd\Bx$ is equivalent to the linear elastic energy. For the purpose of establishing the coercivity of the linear elastic energy, Korn 
proved [\ref{bib:Korn.1},\ref{bib:Korn.2}] the estimate (\ref{1.1}) and the Korn second inequality. The estimate (\ref{1.4}) was first proven by Mengesha in the case when $\Omega$ is the entire space or the upper half-space $\{x_n>0\},$ and $p=2, ps\neq 1$ in [\ref{bib:Mengesha.2}], and was later extended to any
 $1<p<\infty, ps\neq 1$ by Mengesha and Scott in [\ref{bib:Men.Sco.1}]. When dealing with non-local operators like on both sides of (\ref{1.4}), it is sometimes favorable to extend the field $\Bu$ into the entire space $\mathbb R^n$ and work with the extended domain to gain access to Fourier (or other transformation) analysis tools. This strategy has been successfully employed in all of the works [\ref{bib:Mengesha.2},\ref{bib:Men.Sco.1},\ref{bib:Men.Sco.2}], where the idea of Nitsche [\ref{bib:Nitsche}] of extending the field $\Bu$ into $\mathbb R^n$ so that, the symmetric gradient of the extension is suitably controllable plays a vital role. The question of validity of the inequality (\ref{1.4}) for vector fields $\Bu\in W^{s,p}_0(\Omega,\mathbb R^n)$ in bounded domains $\Omega$ was addressed in [\ref{bib:Mengesha.2},\ref{bib:Men.Sco.1},\ref{bib:Men.Sco.2}], and has remained open, and is the topic of the present manuscript. We answer this question by proving that in fact (\ref{1.4}) holds in bounded $C^1$-regular domains in the case $ps>1,$ and fails to hold when $ps<1.$ The interesting fact is that in the case $ps<1,$ it does not only fail in general, but it necessarily fails in any open bounded $C^1$ domains, see Theorem~2.1. The outcome in the case $ps<1$ in bounded domains is in stark contrast with the case of unbounded domains like the entire space or epigraphs, where (1.4) holds as long as $ps\neq 1$, [\ref{bib:Mengesha.2},\ref{bib:Men.Sco.1},\ref{bib:Men.Sco.2}]. However, it is not surprising as Korn's first inequality does not hold in the borderline case $p=1$ in the classical case as shown by the celebrated work of Ornstein [\ref{bib:Ornstein}], see also [\ref{bib:Con.Far.Mag.}] for another approach to constructing counterexamples to such inequalities. For the proof in the case $ps>1,$ following Kondratiev and Oleinik  [\ref{bib:Kon.Ole.}] as for the classical case, we employ the recently proven Korn second inequality by Mengesha and Scott [\ref{bib:Men.Sco.2}, Theorem~1.1] in bounded $C^1$ domains, which reads as follows:
\begin{equation}
\label{1.5}
\int_{\Omega}\int_{\Omega}\frac{|\Bu(\Bx)-\Bu(\By)|^p}{|\Bx-\By|^{n+ps}}d\Bx d\By\leq 
C \left(\int_{\Omega}\int_{\Omega}\frac{|(\Bu(\Bx)-\Bu(\By))\cdot(\Bx-\By)|^p}{|\Bx-\By|^{n+ps+p}}d\Bx d\By+\int_\Omega |\Bu(\Bx)|^pd\Bx\right),
\end{equation}
for all vector fields $\Bu\in W_0^{s,p}(\Omega,\mathbb R^n).$ A considerably shorter proof of (\ref{1.5}) for bounded $C^1$ domains or for bounded Lipschitz 
domains with small Lipschitz constant has been recently given by Rutkowski in [\ref{bib:Rutkowski}, Theorem~1.1]

For the case $ps<1,$ we construct the counterexamples $\Bu\in C_c^1(\Omega,\mathbb R^n)$ by interpolating between affine and zero maps, where the nonzero affine value is taken in most of the interior of the domain $\Omega,$ while the zero values are taken near the boundary. 

Also, an important consequence of the validity of (\ref{1.4}) in the case $ps>1$ is that if one extends any given field $\Bu\in W_0^{s,p}(\Omega,\mathbb R^n)$ onto $\mathbb R^n$ by zero outside $\Omega,$ then the $W^{s,p}(\mathbb R^n)$ and $\mathcal{X}^{s,p}(\mathbb R^n)$ norms of the extended field $\bar\Bu$ remain equivalent to the same norms of the original field $\Bu$ in $\Omega,$ see Theorem~\ref{th:2.2}. 

\section{Main Results}
\setcounter{equation}{0}
\label{sec:2}

The below theorems contain the main results of the paper. 
\begin{theorem}
\label{th:2.1}
Let $s\in (0,1)$ and $p\in [1,\infty)$ such that $ps\neq 1.$ Assume $\Omega\subset\mathbb R^n$ is an open bounded $C^1-$regular domain. Then the following holds:\\
\textbf{(i) The case $ps>1$}. There exists a constant $C=C(n,p,s,\Omega)$ such that 
\begin{equation}
\label{2.1}
\|\Bu\|_{W^{s,p}(\Omega)}\leq C[\Bu]_{\mathcal{X}^{s,p}(\Omega)},
\end{equation}
for all vector fields $\Bu\in W_0^{s,p}(\Omega,\mathbb R^n).$\\ 
\textbf{(ii) The case $ps<1$}. There exists a sequence of vector fields 
$\Bu_k\in C_c^1(\Omega,\mathbb R^n)$ such that
\begin{equation}
\label{2.2}
[\Bu_k]_{\mathcal{X}^{s,p}(\Omega)}\left(\frac{1}{[\Bu_k]_{W^{s,p}(\Omega)}}+\frac{1}{\|\Bu_k\|_{L^{p}(\Omega)}}\right)\to 0
\qquad \text{as}\qquad k\to \infty.
\end{equation}
Consequently, in the case $ps<1,$ Korn's inequality fails to hold in any open bounded $C^1$ domain.

\end{theorem}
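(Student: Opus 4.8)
The plan is to prove part (ii) by exhibiting an explicit cutoff-type sequence. Since $\Omega$ is $C^1$, fix an interior point and a small ball $B_{r_0}(\Bx_0)\subset\Omega$, and let $d(\Bx)=\mathrm{dist}(\Bx,\partial\Omega)$. I would take $\Bu_k(\Bx)=\Gvf_k(\Bx)(\BA\Bx+\Bb)$, where $\BA$ is a fixed nonzero skew-symmetric matrix, $\Bb$ a fixed vector, and $\Gvf_k\in C_c^1(\Omega)$ is a cutoff equal to $1$ on the set $\{d>1/k\}$ (intersected with a large fixed ball to keep things bounded), transitioning to $0$ in the boundary layer $\{d<1/k\}$, with $|\nabla\Gvf_k|\lesssim k$ there. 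The point is that on the bulk where $\Gvf_k\equiv 1$ the field agrees with a rigid motion, which is annihilated by the $\mathcal X^{s,p}$ form pointwise since $(\BA\Bx+\Bb-\BA\By-\Bb)\cdot(\Bx-\By)=\BA(\Bx-\By)\cdot(\Bx-\By)=0$. Hence the only contributions to $[\Bu_k]_{\mathcal X^{s,p}(\Omega)}^p$ come from pairs $(\Bx,\By)$ at least one of which lies in the shrinking boundary layer $L_k=\{0<d<1/k\}$, whose measure is $O(1/k)$ by $C^1$-regularity.

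Next I would estimate the three pieces. The denominator terms are easy: $\|\Bu_k\|_{L^p(\Omega)}^p\to |\Omega'|\,\|\BA\Bx+\Bb\|^p_{\infty}$-type constant bounded below uniformly, since $\Bu_k$ equals the fixed nonzero rigid motion on a set of measure bounded below; similarly $[\Bu_k]_{W^{s,p}(\Omega)}$ stays bounded below because $\Bu_k$ converges (e.g.\ in $L^p$) to a nonconstant limit, so it cannot degenerate in the full Sobolev seminorm — alternatively one restricts the double integral to the fixed bulk region where $\Bu_k$ is the fixed non-rigid-looking\,(but here rigid) affine map; to be safe I'd instead note $[\Bu_k]_{W^{s,p}}\ge c\|\Bu_k-\text{avg}\|_{L^p}\ge c'>0$ via a Poincar\'e-type bound, so $1/[\Bu_k]_{W^{s,p}}+1/\|\Bu_k\|_{L^p}$ is bounded. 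Thus it suffices to show $[\Bu_k]_{\mathcal X^{s,p}(\Omega)}\to 0$. Writing $\Bu_k(\Bx)-\Bu_k(\By)=\Gvf_k(\Bx)(\BA\Bx+\Bb)-\Gvf_k(\By)(\BA\By+\Bb)$ and adding and subtracting $\Gvf_k(\By)(\BA\Bx+\Bb)$, the numerator splits into a term $|\Gvf_k(\Bx)-\Gvf_k(\By)|\,|\BA\Bx+\Bb|\,|\Bx-\By|$ (after dotting with $\Bx-\By$, using $|(\BA\Bx+\Bb)\cdot(\Bx-\By)|\le |\BA\Bx+\Bb||\Bx-\By|$) plus the vanishing rigid-motion term. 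So
\begin{equation}
\label{eq:2.plan}
[\Bu_k]_{\mathcal X^{s,p}(\Omega)}^p\le C\int_{\Omega}\int_{\Omega}\frac{|\Gvf_k(\Bx)-\Gvf_k(\By)|^p\,|\BA\Bx+\Bb|^p}{|\Bx-\By|^{n+ps}}\,d\Bx\,d\By\le C'\,[\Gvf_k]_{W^{s,p}(\Omega)}^p,
\end{equation}
uniformly in $k$ since $|\BA\Bx+\Bb|$ is bounded on $\Omega$.

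The crux is therefore to show $[\Gvf_k]_{W^{s,p}(\Omega)}\to 0$ when $ps<1$ for the boundary-layer cutoffs. This is where the condition $ps<1$ is essential and is the main obstacle: one needs the standard fact that the characteristic-function-like cutoff of a width-$1/k$ collar has fractional $W^{s,p}$ seminorm tending to zero precisely when $ps<1$ (for $ps>1$ it blows up, and for $ps=1$ it is borderline). I would prove it by splitting the double integral over $\{|\Bx-\By|<1/k\}$ and $\{|\Bx-\By|\ge 1/k\}$: on the far part, $|\Gvf_k(\Bx)-\Gvf_k(\By)|\le 1$ is supported on $(\Bx\in L_k)$ or $(\By\in L_k)$ so the integral is bounded by $C|L_k|\int_{1/k}^{\mathrm{diam}\,\Omega} \Gr^{-ps-1}d\Gr\lesssim |L_k|\,k^{ps}\lesssim k^{ps-1}\to 0$; on the near part, using $|\Gvf_k(\Bx)-\Gvf_k(\By)|\le \|\nabla\Gvf_k\|_\infty|\Bx-\By|\lesssim k|\Bx-\By|$ and that the support forces $\Bx$ (hence $\By$, up to $1/k$) into a $O(1/k)$-neighborhood of $L_k$ of measure $O(1/k)$, one gets $\lesssim k^p\cdot|L_k^{(1/k)}|\int_0^{1/k}\Gr^{p-ps-1}d\Gr\lesssim k^p\cdot k^{-1}\cdot k^{-(p-ps)}=k^{ps-1}\to 0$. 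Combining, $[\Gvf_k]_{W^{s,p}(\Omega)}^p\lesssim k^{ps-1}\to 0$, which via \eqref{eq:2.plan} gives $[\Bu_k]_{\mathcal X^{s,p}(\Omega)}\to 0$ and hence \eqref{2.2}. The only care needed is the $C^1$-regularity bookkeeping guaranteeing $|L_k|\le C/k$ and that the $1/k$-enlargement of $L_k$ still has measure $O(1/k)$, which follows from a finite covering of $\partial\Omega$ by coordinate patches in which the collar is a graph-type region of controlled thickness.
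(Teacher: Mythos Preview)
Your proposal for part (ii) is correct and follows essentially the same approach as the paper: both construct the counterexample by cutting off a nonzero skew-affine field $\BA\Bx(+\Bb)$ in a boundary layer of width $\sim 1/k$, bound the denominators from below by restricting to a fixed bulk region (your hesitation there is unnecessary --- the restriction argument you first mention is exactly what the paper uses in (3.10)), and show $[\Bu_k]_{\mathcal X^{s,p}}^p\lesssim k^{ps-1}$ via a near/far splitting using the Lipschitz bound $|\nabla\Gvf_k|\lesssim k$, the layer measure $|L_k|\lesssim 1/k$, and $ps<1$. The only cosmetic difference is that you package the estimate through the clean identity $(\Bu_k(\Bx)-\Bu_k(\By))\cdot(\Bx-\By)=(\Gvf_k(\Bx)-\Gvf_k(\By))(\BA\Bx+\Bb)\cdot(\Bx-\By)$, reducing everything to $[\Gvf_k]_{W^{s,p}(\Omega)}^p$, whereas the paper mollifies the piecewise field and splits $\Omega\times\Omega$ directly into bulk--bulk ($=0$), bulk--layer, and layer--layer pieces; the resulting integrals and exponents are identical.
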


The next theorem establishes that if one extends any Sobolev field $\Bu\in W_0^{s,p}(\Omega,\mathbb R^n)$ onto $\mathbb R^n$ by zero outside $\Omega,$ then the seminorms (in this case those are also norms) $[\bar \Bu]_{W^{s,p}(\mathbb R^n)}$ and $[\Bu]_{W^{s,p}(\Omega)}$ as well as the seminorms $[\bar\Bu]_{\mathcal{X}^{s,p}(\mathbb R^n)}$ and $[\Bu]_{\mathcal{X}^{s,p}(\Omega)}$ will still be equivalent in the case $ps>1.$ This was noted by Rutkowski in [\ref{bib:Rutkowski}] under the availability of (\ref{2.1}). Now, keeping in mind that obviously the norms $\|\bar\Bu\|_{L^{p}(\mathbb R^n)}$ and $\|\Bu\|_{L^{p}(\Omega)}$ are always equivalent, this result would allow one to work with fields defined over 
$\mathbb R^n$ instead of $\Omega,$ opening up access to Fourier type analysis in particular.

\begin{theorem}
\label{th:2.2}
Let $s\in (0,1)$ and $p\in (1,\infty)$ such that $ps>1.$ Assume $\Omega\subset\mathbb R^n$ is an open bounded $C^1-$regular domain. 
For any Sobolev field $\Bu\in W_0^{s,p}(\Omega,\mathbb R^n),$ denote by $\bar \Bu$ the extension of $\Bu$ onto $\mathbb R^n$ by zero outside $\Omega.$ There exists a constant $C=C(n,p,s,\Omega)$ such that 
\begin{equation}
\label{2.3}
[\bar \Bu]_{W^{s,p}(\mathbb R^n)}\leq C[\Bu]_{W^{s,p}(\Omega)},
\end{equation}
and
\begin{equation}
[\bar\Bu]_{\mathcal{X}^{s,p}(\mathbb R^n)}\leq C[\Bu]_{\mathcal{X}^{s,p}(\Omega)},
\end{equation}
for all vector fields $\Bu\in W_0^{s,p}(\Omega,\mathbb R^n).$ 
\end{theorem}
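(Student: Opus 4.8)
The plan is to obtain both inequalities from the splitting of the double integral over $\mathbb{R}^n\times\mathbb{R}^n$ according to whether each variable lies in $\Omega$ or in $\Omega^c:=\mathbb{R}^n\setminus\Omega$, combined with a fractional Hardy inequality and the already established Korn inequality (\ref{2.1}). Since $\bar\Bu\equiv 0$ on $\Omega^c$, the block $\Omega^c\times\Omega^c$ contributes nothing, and the two crossing blocks contribute equal amounts, so
$$
[\bar\Bu]_{W^{s,p}(\mathbb{R}^n)}^p=[\Bu]_{W^{s,p}(\Omega)}^p+2\int_{\Omega}\int_{\Omega^c}\frac{|\Bu(\Bx)|^p}{|\Bx-\By|^{n+ps}}\,d\By\,d\Bx ,
$$
while, using $|(\bar\Bu(\Bx)-\bar\Bu(\By))\cdot(\Bx-\By)|\le|\Bu(\Bx)|\,|\Bx-\By|$ on the crossing blocks,
$$
[\bar\Bu]_{\mathcal{X}^{s,p}(\mathbb{R}^n)}^p\le[\Bu]_{\mathcal{X}^{s,p}(\Omega)}^p+2\int_{\Omega}\int_{\Omega^c}\frac{|\Bu(\Bx)|^p}{|\Bx-\By|^{n+ps}}\,d\By\,d\Bx .
$$
Thus both statements reduce to controlling the single crossing integral $J:=\int_{\Omega}\int_{\Omega^c}|\Bx-\By|^{-n-ps}\,|\Bu(\Bx)|^p\,d\By\,d\Bx$ by the appropriate right-hand side.

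To estimate $J$, fix $\Bx\in\Omega$ and set $\delta(\Bx)=\mathrm{dist}(\Bx,\partial\Omega)$. The ball $B(\Bx,\delta(\Bx))$ lies in $\Omega$, so $\Omega^c\subset\{\By:|\By-\Bx|\ge\delta(\Bx)\}$, and integrating in polar coordinates (using $ps>0$) gives
$$
\int_{\Omega^c}\frac{d\By}{|\Bx-\By|^{n+ps}}\le\int_{|\By-\Bx|\ge\delta(\Bx)}\frac{d\By}{|\Bx-\By|^{n+ps}}=\frac{C(n)}{ps}\,\delta(\Bx)^{-ps} ,
$$
hence $J\le C\int_\Omega|\Bu(\Bx)|^p\,\delta(\Bx)^{-ps}\,d\Bx$. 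Since $\Omega$ is bounded and $C^1$ (hence Lipschitz), $ps>1$, and $\Bu\in W_0^{s,p}(\Omega,\mathbb{R}^n)$, the fractional Hardy inequality — the very one entering the proof of Theorem~\ref{th:2.1}(i) — yields $\int_\Omega|\Bu(\Bx)|^p\,\delta(\Bx)^{-ps}\,d\Bx\le C[\Bu]_{W^{s,p}(\Omega)}^p$, so that $J\le C[\Bu]_{W^{s,p}(\Omega)}^p$.

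Plugging this bound into the first display immediately gives (\ref{2.3}). For the second display we additionally invoke Theorem~\ref{th:2.1}(i): by (\ref{2.1}), $[\Bu]_{W^{s,p}(\Omega)}\le\|\Bu\|_{W^{s,p}(\Omega)}\le C[\Bu]_{\mathcal{X}^{s,p}(\Omega)}$, whence $J\le C[\Bu]_{\mathcal{X}^{s,p}(\Omega)}^p$; substituting this into the second display bounds $[\bar\Bu]_{\mathcal{X}^{s,p}(\mathbb{R}^n)}^p$ by $C[\Bu]_{\mathcal{X}^{s,p}(\Omega)}^p$, which is the assertion. (As noted in the text, this is Rutkowski's observation once (\ref{2.1}) is available.)

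The only non-elementary ingredients are the fractional Hardy inequality for $W_0^{s,p}$ — valid precisely because $ps>1$, and where the vanishing boundary trace encoded in $W_0^{s,p}$ is essential — and the Korn first inequality (\ref{2.1}), which is where the restriction $ps>1$ is really binding, since (\ref{2.1}) fails for $ps<1$ by Theorem~\ref{th:2.1}(ii). Everything else is the bookkeeping of the integral splitting and the polar-coordinate estimate, so I do not expect a genuine obstacle; the one point worth checking is that the Hardy constant depends only on $n,p,s$ and $\Omega$, which is the case for bounded Lipschitz domains.
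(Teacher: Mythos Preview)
Your proof is correct and follows essentially the same route as the paper: split the double integral into the $\Omega\times\Omega$ block plus the crossing blocks, bound the crossing integral by $\int_\Omega |\Bu|^p\,\delta^{-ps}$ via the polar-coordinate estimate, and then apply the fractional Hardy inequality (Theorem~\ref{th:3.1}) for (\ref{2.3}) and Hardy combined with Korn's first inequality (\ref{2.1}) for the $\mathcal{X}^{s,p}$ bound. The only cosmetic difference is that the paper packages the latter combination as Corollary~\ref{cor:3.3} before invoking it, whereas you chain the two inequalities explicitly.
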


\section{Proofs of the main results}
\setcounter{equation}{0}
\label{sec:3}
\begin{proof}[Proof of Theorem \ref{th:2.1}]
We will establish (i) first.\\ 
\textbf{Proof of (i).}

\noindent The arguments are borrowed from the classical Korn inequality theory. Following Kondratiev and Oleinik [\ref{bib:Kon.Ole.}] assume (\ref{2.1}) fails to hold. Hence there exists a sequence $\Bu_k\in W_0^{s,p}(\Omega,\mathbb R^n)$ such that 
\begin{equation}
\label{3.1}
\|\Bu_k\|_{W^{s,p}(\Omega)}=1\quad \text{and} \quad [\Bu_k]_{\mathcal{X}^{s,p}(\Omega)}\leq 1/k,\quad k=1,2,\dots
\end{equation}
By density we can assume without loss of generality that $\Bu_k\in C_c^1(\Omega,\mathbb R^n).$ From the compactness theorem 
[\ref{bib:DiN.Pat.Val.}, Theorem~7.1], we have that the sequence $\{\Bu_k\}$ is pre-compact in $L^p(\Omega),$ thus we can assume 
without loss of generality that 
\begin{equation}
\label{3.2}
\Bu_k\to \Bu \quad \text{in}\quad L^p(\Omega),
\end{equation}
for some field $\Bu\in L^p(\Omega).$ We have $\Bu_k-\Bu_m\in W_0^{s,p}(\Omega,\mathbb R^n),$ thus by (\ref{1.5}) ([\ref{bib:Men.Sco.2}, Thereom~1.1]) we have by the triangle inequality
\begin{align}
\label{3.3}
 \|\Bu_k-\Bu_m\|_{W^{s,p}(\Omega)}&\leq C([\Bu_k-\Bu_m]_{\mathcal{X}^{s,p}(\Omega)}+\|\Bu_k-\Bu_m\|_{L^p(\Omega)})\\ \nonumber
 & \leq  C([\Bu_k]_{\mathcal{X}^{s,p}(\Omega)}+[\Bu_m]_{\mathcal{X}^{s,p}(\Omega)}+\|\Bu_k-\Bu_m\|_{L^p(\Omega)})\\ \nonumber
 &\leq C(1/k+1/m+\|\Bu_k-\Bu_m\|_{L^p(\Omega)}).
 \end{align}
Note that conditions (\ref{3.2})-(\ref{3.3}) imply that the sequence $\{\Bu_k\}$ is Cauchy and thus is convergent in $W^{s,p}(\Omega).$ This gives 
\begin{equation}
\label{3.4}
\Bu_k\to \Bu \quad \text{in}\quad W^{s,p}(\Omega).
\end{equation}
From (\ref{3.4}) and the obvious estimate 
\begin{align*}
[\Bu]_{\mathcal{X}^{s,p}(\Omega)}&\leq [\Bu_k]_{\mathcal{X}^{s,p}(\Omega)}+[\Bu-\Bu_k]_{\mathcal{X}^{s,p}(\Omega)}\\
&\leq 1/k+[\Bu-\Bu_k]_{W^{s,p}(\Omega)}
\end{align*}
we also infer that $[\Bu]_{\mathcal{X}^{s,p}(\Omega)}=0,$ which implies that 
\begin{equation}
\label{3.5}
\Bu(\Bx)=\BA\cdot \Bx+\Bb,\quad\text{for a.e.}\quad\Bx\in\Omega,
\end{equation}
for some constant skew-symmetric matrix $\BA\in \mathbb R^{n\times n}$ and some vector $\Bb\in\mathbb R^n$ [\ref{bib:Tem.Mir.}, Proposition 1.2]. 
We aim to prove that $\BA=0$ and $\Bb=0.$ This can be achieved by either a utilization of a Hardy inequality or the notion of trace. 
The following Hardy inequality was proven by Dyda [\ref{bib:Dyda}, Theorem~1.1]; a stronger version was recently proven in 
[\ref{bib:Du.Men.Tia.}, Theorem~1.2].

\begin{theorem}
\label{th:3.1}
Let $s\in (0,1)$ and $p\in (1,\infty)$ such that $ps>1.$ Assume $\Omega\subset\mathbb R^n$ is an open bounded Lipschitz domain. 
Then there exists a constant $C=C(n,p,s,\Omega)$ such that 
\begin{equation}
\label{3.6}
\int_\Omega\frac{|\Bu(\Bx)|^p}{(\mathrm{dist}(\Bx,\partial\Omega))^{ps}} \leq C[\Bu]_{W^{s,p}(\Omega)}^p,
\end{equation}
for all vector fields $\Bu\in C_c^{1}(\Omega,\mathbb R^n).$ 
\end{theorem}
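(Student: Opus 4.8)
The statement to prove is the fractional Hardy inequality (Theorem 3.1), which the authors attribute to Dyda. Let me think about how I would prove this.

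\textbf{My plan.}

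The plan is to prove the Hardy inequality \eqref{3.6} by a covering/localization argument that reduces the bounded Lipschitz domain to model half-space estimates, combined with the one-dimensional fractional Hardy inequality on a half-line. Since the inequality is already known (Dyda), the cleanest route is to follow Dyda's proof structure: first establish the inequality on the model domain $\mathbb{R}^n_+ = \{x_n > 0\}$ with $\mathrm{dist}(x,\partial\Omega)$ replaced by $x_n$, then transfer to general Lipschitz $\Omega$ by a partition of unity subordinate to a finite cover by bi-Lipschitz images of half-space neighborhoods, using that $\mathrm{dist}(\cdot,\partial\Omega)$ is comparable to the boundary-flattening coordinate.

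First I would reduce to the half-space. For $u \in C_c^1(\mathbb{R}^n_+)$ the key is the one-dimensional estimate: for $v \in C_c^1(0,\infty)$ and $ps > 1$,
\begin{equation*}
\int_0^\infty \frac{|v(t)|^p}{t^{ps}}\,dt \le C(p,s) \int_0^\infty\int_0^\infty \frac{|v(t)-v(\tau)|^p}{|t-\tau|^{1+ps}}\,dt\,d\tau.
\end{equation*}
This follows because $v(t) = v(t) - v(\tau)$ after averaging $\tau$ over, say, $(t, 2t)$: write $|v(t)|^p \le C t^{-1}\int_t^{2t}|v(t)-v(\tau)|^p\,d\tau + C t^{-1}\int_t^{2t}|v(\tau)|^p\,d\tau$, but the second term is handled by noting $v$ vanishes near $0$ and iterating, or more directly by the standard Hardy-type argument that the condition $ps>1$ makes the weight $t^{-ps}$ integrable near $0$ against $C_c^1$ functions and a Minkowski/duality manipulation closes the estimate. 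Then one integrates the 1D inequality along lines parallel to $e_n$ and uses Fubini together with the elementary bound $|x-y|^{n+ps} \ge |x_n - y_n|^{1+ps}\cdot(\text{angular factor})$, more carefully integrating the transverse variables, to pass from the 1D seminorm to the full $n$-dimensional double integral over $\mathbb{R}^n_+$; here the restriction $ps>1$ (equivalently $1+ps>1$) is exactly what guarantees convergence of the relevant transverse integral.

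Next I would globalize. Cover $\partial\Omega$ by finitely many open sets $U_1,\dots,U_N$ on each of which, after a $C^1$ (hence bi-Lipschitz) change of coordinates $\Phi_j$, the domain looks like a half-space and $\mathrm{dist}(x,\partial\Omega)$ is comparable to the last coordinate $(\Phi_j(x))_n$; add an interior set $U_0 \Subset \Omega$ on which $\mathrm{dist}(x,\partial\Omega) \ge c_0 > 0$. Take a partition of unity $\{\chi_j\}_{j=0}^N$. On $U_0$ the left side of \eqref{3.6} is bounded by $c_0^{-ps}\|u\|_{L^p}^p$, which is dominated by $[\Bu]_{W^{s,p}(\Omega)}^p$ via a Poincaré-type inequality for $C_c^1$ fields (or simply by noting $\|u\|_{L^p(\Omega)} \le C[\Bu]_{W^{s,p}(\Omega)}$ on bounded domains for $W^{s,p}_0$ functions). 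On each boundary patch, pull back by $\Phi_j$, apply the half-space inequality to $(\chi_j u)\circ\Phi_j^{-1}$, and change variables back; the Jacobians are bounded above and below, and the $W^{s,p}$ seminorm is comparable under bi-Lipschitz maps (a standard fact, with the comparison constant depending on the Lipschitz constants, $n$, $p$, $s$). Finally, one controls the cross terms: the seminorm $[\chi_j u]_{W^{s,p}}$ is bounded by $[\Bu]_{W^{s,p}(\Omega)}$ plus $\|u\|_{L^p(\Omega)}$ using that $\chi_j$ is Lipschitz (the commutator estimate $|\chi_j(x)u(x)-\chi_j(y)u(y)| \le |\chi_j(x)||u(x)-u(y)| + |u(y)||\chi_j(x)-\chi_j(y)|$ and, for the second piece, splitting the integral into $|x-y|\le 1$ and $|x-y|>1$), and again $\|u\|_{L^p(\Omega)} \le C[\Bu]_{W^{s,p}(\Omega)}$.

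The main obstacle is the transverse integration in the half-space step: making rigorous that the full $n$-dimensional Gagliardo double integral controls the 1D integral along each vertical fiber, uniformly, so that integrating the 1D Hardy inequality and applying Fubini actually produces a bound by the $n$-dimensional seminorm rather than something larger. The clean way around this is to avoid fiberwise reduction and instead argue directly: bound $|u(x)|^p$ by an average of $|u(x)-u(y)|^p$ over a ball $B(x', x_n) \cap \mathbb{R}^n_+$ of radius comparable to $x_n$ (where $x' $ is the transverse part), namely $|u(x)|^p \le C x_n^{-n}\int_{B} |u(x)-u(y)|^p\,dy$ plus a lower-order term absorbed by iteration/the compact support, then multiply by $x_n^{-ps}$, integrate in $x$, swap the order of integration, and observe that for $y$ in that ball $|x-y| \le C x_n$ and $x_n \le C|x-y| $ whenever... — more precisely one uses that the set of $x$ for which a given pair $(x,y)$ is "admissible" has $x_n \approx |x-y|$, so $\int x_n^{-n-ps}\mathbf{1}_{\text{admissible}}\,dx \le C|x-y|^{-ps}\cdot|x-y|^{-n}\cdot|x-y|^n \cdot |x-y|^{... }$ giving exactly the weight $|x-y|^{-n-ps}$. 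Getting the bookkeeping of exponents right there, and confirming that $ps>1$ is used (it enters when the averaging ball must be shrunk near the boundary and a geometric series in dyadic scales $x_n \sim 2^{-k}$ is summed), is the delicate part; everything else is routine localization.
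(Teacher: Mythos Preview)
The paper does not prove Theorem~3.1 at all. It is quoted verbatim as a known result: ``The following Hardy inequality was proven by Dyda [\ref{bib:Dyda}, Theorem~1.1]; a stronger version was recently proven in [\ref{bib:Du.Men.Tia.}, Theorem~1.2].'' The authors then use (\ref{3.6}) as a black box in the proof of Theorem~\ref{th:2.1}(i) and of Theorem~\ref{th:2.2}. So there is no ``paper's own proof'' to compare your attempt against.

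That said, a brief assessment of your sketch as a stand-alone proof. The overall architecture --- reduce to the half-space by a finite bi-Lipschitz boundary cover plus a partition of unity, handle the interior piece by the fractional Poincar\'e inequality $\|\Bu\|_{L^p(\Omega)}\le C[\Bu]_{W^{s,p}(\Omega)}$ for $C_c^1$ fields, and control the commutator $[\chi_j\Bu]_{W^{s,p}}$ via the Lipschitz bound on $\chi_j$ --- is standard and correct. The genuine work is the half-space step, and there your outline is not yet a proof. Route~(a), integrating the one-dimensional Hardy inequality along vertical fibers, produces on the right the quantity $\int_{\mathbb R^{n-1}}\int_0^\infty\!\!\int_0^\infty |u(x',x_n)-u(x',y_n)|^p\,|x_n-y_n|^{-1-ps}\,dx_n\,dy_n\,dx'$, which is \emph{not} dominated by the $n$-dimensional Gagliardo seminorm without an additional argument; you acknowledge this. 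Route~(b), the direct averaging, needs more than ``a lower-order term absorbed by iteration'': the naive self-improving bound gives $\int |u|^p d_\Bx^{-ps}\le C_1[\Bu]^p+C_2\int |u|^p d_\Bx^{-ps}$ with a constant $C_2$ that need not be $<1$. The standard fix is a Whitney-chain/telescoping argument (average over a ball of radius $\sim d(\Bx)$, then pass to a neighboring larger Whitney ball, iterate to a fixed interior ball), and it is precisely in summing that chain with weight $d(\Bx)^{-ps}$ that the hypothesis $ps>1$ enters to make a dyadic geometric series converge. You allude to this (``geometric series in dyadic scales''), and it can be carried out, but as written the half-space step is an outline rather than a proof.
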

Note first that because of (\ref{3.2}) we have for any $\delta>0$ that 
$$\int_{\{\By\in \Omega \  :  \ \mathrm{dist}(\By,\partial\Omega)\geq \delta\}}\frac{|\Bu(\Bx)|^p}{\mathrm{dist}(\Bx,\partial\Omega)^{ps}}d\Bx
=\lim_{k\to\infty} \int_{\{\By\in \Omega \  :  \ \mathrm{dist}(\By,\partial\Omega)\geq \delta\}}\frac{|\Bu_k(\Bx)|^p}{\mathrm{dist}(\Bx,\partial\Omega)^{ps}}d\Bx,$$
hence we have due to Theorem~\ref{th:3.1} and the assumption (\ref{3.1}) the bound 
$$\int_{\{\By\in \Omega \  :  \ \mathrm{dist}(\By,\partial\Omega)\geq \delta\}}\frac{|\Bu(\Bx)|^p}{\mathrm{dist}(\Bx,\partial\Omega)^{ps}}d\Bx\leq C.$$
Consequently we discover letting $\delta$ go to zero:
\begin{equation}
\label{3.7}
\int_{\Omega}\frac{|\Bu(\Bx)|^p}{\mathrm{dist}(\Bx,\partial\Omega)^{ps}}d\Bx
=\int_{\Omega}\frac{|\BA\cdot\Bx+\Bb|^p}{\mathrm{dist}(\Bx,\partial\Omega)^{ps}}d\Bx\leq C.
\end{equation}
Note that as $ps>1$, then if $\BA\neq 0$ or $\Bb\neq 0,$ the integral in (\ref{3.7}) will necessarily diverge. Therefore we have $\BA=0$ and $\Bb=0$ and thus   $\Bu(\Bx)=0,$ for a.e. $\Bx\in\Omega,$ which contradicts the fact that $\Bu_k\to \Bu$ in $W^{s,p}(\Omega),$ while we have 
$\|\Bu_k\|_{W^{s,p}(\Omega)}=1$ by (\ref{3.1}) and (\ref{3.4}). 
This completes the proof of part (i).

\begin{remark}
Note that after (\ref{3.5}) one could proceed utilizing the trace theorem in [\ref{bib:Du.Men.Tia.}, Theorem~1.1]. Namely, Theorem~1.1 of [\ref{bib:Du.Men.Tia.}] states that in bounded Lipschitz domains $\Omega\subset\mathbb R^n,$ in the case $ps>1,$ any Sobolev function $\Bu\in W^{s,p}(\Omega)$ admits a trace 
$T\Bu\in W^{s-1/p,p}(\partial\Omega)$ on the boundary $\partial\Omega,$ that satisfies the fractional trace inequality 
$$\|T\Bu\|_{W^{s-1/p,p}(\partial\Omega)}\leq C\|\Bu\|_{W^{s,p}(\Omega)}.$$
Moreover, the trace coincides with the function itself for Lipschitz functions. Consequently, as $T\Bu_k=0$ and $T\Bu=\BA\cdot\Bx+\Bb,$ we have by (\ref{3.2}) and (\ref{3.4}) that
\begin{align*}
\|\BA\cdot\Bx+\Bb\|_{W^{s-1/p,p}(\partial\Omega)}&=\|T\Bu-T\Bu_k\|_{W^{s-1/p,p}(\partial\Omega)}\\
&\leq C\|\Bu-\Bu_k\|_{W^{s,p}(\Omega)}\to 0
\end{align*}
as $k\to\infty,$ which gives $\BA=0$ and $\Bb=0.$
\end{remark}

\begin{corollary}
\label{cor:3.3}
Due to Theorem~\ref{th:2.1} and Theorem~\ref{th:3.1}, one has in the case $ps>1$ the Korn-Hardy inequality 
\begin{equation}
\label{3.8}
\int_\Omega\frac{|\Bu(\Bx)|^p}{\mathrm{dist}(\Bx,\partial\Omega)^{ps}}d\Bx \leq C[\Bu]_{\mathcal{X}^{s,p}(\Omega)}^p,
\end{equation}
for all vector fields $\Bu\in W_0^{s,p}(\Omega,\mathbb R^n)$ by density. 
\end{corollary}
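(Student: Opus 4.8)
The plan is to chain the two inequalities already at our disposal on smooth test fields and then extend by density. First I would fix $\Bu\in C_c^1(\Omega,\mathbb R^n)$. Since $\Omega$ is $C^1$ (hence Lipschitz) and $ps>1$, Theorem~\ref{th:3.1} applies and gives
\[
\int_\Omega\frac{|\Bu(\Bx)|^p}{\mathrm{dist}(\Bx,\partial\Omega)^{ps}}d\Bx\leq C[\Bu]_{W^{s,p}(\Omega)}^p\leq C\|\Bu\|_{W^{s,p}(\Omega)}^p,
\]
while part (i) of Theorem~\ref{th:2.1} gives $\|\Bu\|_{W^{s,p}(\Omega)}\leq C[\Bu]_{\mathcal{X}^{s,p}(\Omega)}$. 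Composing the two yields (\ref{3.8}) for every $\Bu\in C_c^1(\Omega,\mathbb R^n)$, with a constant $C=C(n,p,s,\Omega)$.

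Next I would remove the smoothness assumption. Given $\Bu\in W_0^{s,p}(\Omega,\mathbb R^n)$, choose $\Bu_k\in C_c^1(\Omega,\mathbb R^n)$ with $\Bu_k\to\Bu$ in $W^{s,p}(\Omega)$. From the pointwise Cauchy--Schwarz bound $|(\Bv(\Bx)-\Bv(\By))\cdot(\Bx-\By)|\leq|\Bv(\Bx)-\Bv(\By)|\,|\Bx-\By|$ one gets $[\,\cdot\,]_{\mathcal{X}^{s,p}(\Omega)}\leq[\,\cdot\,]_{W^{s,p}(\Omega)}$ (already used in the proof of part (i)), so $[\Bu_k]_{\mathcal{X}^{s,p}(\Omega)}\to[\Bu]_{\mathcal{X}^{s,p}(\Omega)}$. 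Since moreover $\Bu_k\to\Bu$ in $L^p(\Omega)$, passing to a subsequence we may assume $\Bu_k\to\Bu$ a.e.\ in $\Omega$, and Fatou's lemma then gives
\[
\int_\Omega\frac{|\Bu(\Bx)|^p}{\mathrm{dist}(\Bx,\partial\Omega)^{ps}}d\Bx\leq\liminf_{k\to\infty}\int_\Omega\frac{|\Bu_k(\Bx)|^p}{\mathrm{dist}(\Bx,\partial\Omega)^{ps}}d\Bx\leq C\liminf_{k\to\infty}[\Bu_k]_{\mathcal{X}^{s,p}(\Omega)}^p=C[\Bu]_{\mathcal{X}^{s,p}(\Omega)}^p,
\]
which is (\ref{3.8}) for arbitrary $\Bu\in W_0^{s,p}(\Omega,\mathbb R^n)$.

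There is no real obstacle here: the two inputs do all the work, and the only point that deserves a word of care is the passage to the limit on the left-hand side, where dominated convergence is unavailable because the weight $\mathrm{dist}(\cdot,\partial\Omega)^{-ps}$ is non-integrable near $\partial\Omega$; extracting an a.e.-convergent subsequence and invoking Fatou circumvents this. Alternatively, one may avoid Fatou entirely by applying the $C_c^1$ version of (\ref{3.8}) to the differences $\Bu_k-\Bu_m$ to see that $\{\Bu_k\}$ is Cauchy in the weighted Lebesgue space $L^p\bigl(\Omega;\mathrm{dist}(\cdot,\partial\Omega)^{-ps}d\Bx\bigr)$, hence convergent there with limit necessarily $\Bu$, and then pass to the limit directly in the smooth inequality.
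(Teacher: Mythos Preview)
Your proposal is correct and follows exactly the approach the paper intends: the corollary is stated without a separate proof, the justification being simply ``Due to Theorem~\ref{th:2.1} and Theorem~\ref{th:3.1}'' together with the phrase ``by density'', and you have faithfully unpacked that chaining and the density passage. Your care with the limit on the weighted left-hand side (via Fatou or the Cauchy-in-weighted-$L^p$ alternative) is the only nontrivial detail, and you handled it properly.
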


\noindent\textbf{Proof of (ii).}\\
For any $\delta>0$ denote $\Omega_{\delta}=\{\Bx\in\Omega \  :  \ \mathrm{dist}(\Bx,\Omega^c)>\delta\}.$ Let $\BA\in\mathbb R^{n\times n}$ be a constant nonzero skew-symmetric matrix. Let $\delta$ be fixed and so small that $\Omega_{5\delta}$ is not empty. For any $\epsilon\in (0,\delta)$ define the vector field 
\begin{equation}
\label{3.9}
\Bu(\Bx)=
\begin{cases}
\BA\cdot \Bx, & \Bx\in \Omega_{3\epsilon}\\[1ex]
0, & \Bx\in \Omega-\bar\Omega_{3\epsilon}.
\end{cases}
\end{equation}
Let $\eta_\epsilon(\Bx)\colon\mathbb R^n\to\mathbb R$ be the standard radial mollifier. Define the mollification  
$\Bu^\epsilon(\Bx)=(\Bu\ast \eta_\epsilon)(\Bx)\colon\Omega_\epsilon\to\mathbb R.$ It is clear that $\Bu(\Bx)=0$ for 
$\Bx\in\Omega_{\epsilon}-\bar \Omega_{2\epsilon},$ thus if we extend the field $\Bu^\epsilon$ into all of $\Omega$ by zero and denote the extended field again by 
$\Bu^\epsilon,$ it will preserve the smoothness property: $\Bu^\epsilon\in C_c^{\infty}(\Omega).$ The field $\Bu^\epsilon$ is basically an interpolation between 
$\BA\cdot\Bx$ and zero over the set $\Omega_\epsilon-\bar\Omega_{4\epsilon}.$ We claim that by choosing $\epsilon$ small enough, we can make both of the ratios 
$\frac{[\Bu]_{\mathcal{X}^{s,p}(\Omega)}}{[\Bu]_{W^{s,p}(\Omega)}}$ and $\frac{[\Bu]_{\mathcal{X}^{s,p}(\Omega)}}{\|\Bu\|_{L^p(\Omega)}}$ as small as we wish. Indeed, on one hand it is clear that as $\eta_\epsilon$ is radial, then for each $\Bx\in\Omega_{4\epsilon}$ we have 
$$\Bu^\epsilon(\Bx)=\int_{B_{\epsilon}(0)}(\BA\cdot(\Bx+\By)) \eta_\epsilon(\By)d\By=\BA\cdot \Bx+
\int_{B_{\epsilon}(0)}(\BA\cdot\By) \eta_\epsilon(\By)d\By=\BA\cdot\Bx,$$
hence as $\Omega_{4\delta}\subset\Omega_{4\epsilon},$ we have 
\begin{equation}
\label{3.10}
[\Bu^\epsilon]_{W^{s,p}(\Omega)}\geq [\BA\cdot \Bx]_{W^{s,p}(\Omega_{4\delta})}=C_1>0,
\quad 
\|\Bu^\epsilon\|_{L^p(\Omega)}\geq \|\BA\cdot\Bx\|_{L^p(\Omega_{4\delta})}=C_2>0,
\end{equation}
for some constants $C_1,C_2>0$ depending only on $\BA,$ $\delta$ and $\Omega.$ Let us now estimate the seminorm 
$[\Bu^\epsilon]_{\mathcal{X}^{s,p}(\Omega)}.$ In what follows within the ongoing case the constant $C>0$ 
will depend only on $p,s,n,\Omega$ and the matrix $\BA.$ We have that 
\begin{align}
\label{3.11}
[\Bu^\epsilon]_{\mathcal{X}^{s,p}(\Omega)}&
=\int_{\Omega_{4\epsilon}}\int_{\Omega_{4\epsilon}}
\frac{|(\Bu^\epsilon(\Bx)-\Bu^\epsilon(\By))\cdot(\Bx-\By)|^p}{|\Bx-\By|^{n+ps+p}}d\Bx d\By\\ \nonumber
&+2\int_{\Omega_{4\epsilon}}\int_{\Omega-\bar \Omega_{4\epsilon}}
\frac{|(\Bu^\epsilon(\Bx)-\Bu^\epsilon(\By))\cdot(\Bx-\By)|^p}{|\Bx-\By|^{n+ps+p}}d\Bx d\By\\ \nonumber
&+\int_{\Omega-\bar \Omega_{4\epsilon}}\int_{\Omega-\bar \Omega_{4\epsilon}}\frac{|(\Bu^\epsilon(\Bx)-\Bu^\epsilon(\By))\cdot(\Bx-\By)|^p}{|\Bx-\By|^{n+ps+p}}
d\Bx d\By\\ \nonumber
&=I_0+2I_1+I_2.
\end{align}
First let us observe that $\Bu^\epsilon(\Bx)=\BA\Bx$ in $\Omega_{4\epsilon}$, and since $\BA$ is skew-symmetric, we have
$\BA\Bx\cdot\Bx=0$ for an arbitrary $\Bx\in \mathbb R^n$. Thus $I_0=0$.
Further we have $|\Bu^\epsilon(\Bx)|\leq C$ for all $\Bx\in \Omega,$ thus we can estimate 
\begin{align}
\label{3.12}
I_1&=\int_{\Omega_{4\epsilon}}\int_{\Omega-\bar \Omega_{4\epsilon}}\frac{|(\Bu^\epsilon(\Bx)-\Bu^\epsilon(\By))\cdot(\Bx-\By)|^p}{|\Bx-\By|^{n+ps+p}}d\Bx d\By
\\ \nonumber
&\leq C\int_{\Omega_{4\epsilon}}\int_{\Omega-\bar \Omega_{4\epsilon}}\frac{1}{|\Bx-\By|^{n+ps}}d\Bx d\By.
\end{align}
For any fixed $\By\in\Omega-\bar \Omega_{4\epsilon}$ with $\mathrm{dist}(\By,\Omega_{4\epsilon})=t,$ where $0\leq t \leq4\epsilon,$ we clearly have 
$$\int_{\Omega_{4\epsilon}}\frac{1}{|\Bx-\By|^{n+ps}}d\Bx\leq \int_{|\Bz|\geq t}\frac{d\Bz}{|\Bz|^{n+ps}}=Ct^{-ps}.$$
For small enough $\epsilon>0,$ each of the level sets $\Gamma_t=\{\By\in\Omega-\bar \Omega_{4\epsilon}  \  : \  \mathrm{dist}(\By,\Omega_{4\epsilon})=t\}$ will have a perimeter smaller than $C=\mathrm{per}(\Omega)+1,$ thus we get integrating over the level sets $\Gamma_t:$
\begin{equation}
\label{3.13}
I_1\leq C\int_0^{4\epsilon}\int_{\Gamma_t} t^{-ps} dS dt \leq C\int_{0}^{4\epsilon} t^{-ps}dt=C\epsilon^{1-ps}.
\end{equation}
In order to estimate $I_2$ note first that $\Bu^\epsilon$ is a Lipschitz function with Lipschitz constant $C/\epsilon.$ This allows us to estimate 
\begin{align*}
I_2&=\int_{\Omega-\bar \Omega_{4\epsilon}}\int_{\Omega-\bar \Omega_{4\epsilon}}\frac{|(\Bu^\epsilon(\Bx)-\Bu^\epsilon(\By))\cdot(\Bx-\By)|^p}{|\Bx-\By|^{n+ps+p}}
d\Bx d\By\\ \nonumber
&=\int_{\Omega-\bar \Omega_{4\epsilon}}\int_{(\Omega-\bar \Omega_{4\epsilon})\cap\{|\Bx-\By|\leq \epsilon\}}
\frac{|(\Bu^\epsilon(\Bx)-\Bu^\epsilon(\By))\cdot(\Bx-\By)|^p}{|\Bx-\By|^{n+ps+p}}d\Bx d\By\\ \nonumber
&+\int_{\Omega-\bar \Omega_{4\epsilon}}\int_{(\Omega-\bar \Omega_{4\epsilon})\cap\{|\Bx-\By|>\epsilon\}}
\frac{|(\Bu^\epsilon(\Bx)-\Bu^\epsilon(\By))\cdot(\Bx-\By)|^p}{|\Bx-\By|^{n+ps+p}}d\Bx d\By\\ \nonumber
&\leq \frac{C}{\epsilon^p}\int_{\Omega-\bar \Omega_{4\epsilon}}\int_{(\Omega-\bar \Omega_{4\epsilon})\cap\{|\Bx-\By|\leq \epsilon\}}
\frac{1}{|\Bx-\By|^{n+ps-p}}d\Bx d\By\\ \nonumber
&+C\int_{\Omega-\bar \Omega_{4\epsilon}}\int_{(\Omega-\bar \Omega_{4\epsilon})\cap\{|\Bx-\By|>\epsilon\}}\frac{1}{|\Bx-\By|^{n+ps}}d\Bx d\By\\ \nonumber
&=C(I_2^1+I_2^2).
\end{align*}
For small enough $\epsilon,$ for the measure of $\Omega-\bar \Omega_{4\epsilon}$ we have 
$|\Omega-\bar \Omega_{4\epsilon}|\leq (1+\mathrm{per}(\Omega))\epsilon$ by Steiner's formula [\ref{bib:Schneider}], thus we can estimate
\begin{align}
\label{3.14}
I_2^1&=\frac{C}{\epsilon^p}\int_{\Omega-\bar \Omega_{4\epsilon}}\int_{(\Omega-\bar \Omega_{4\epsilon})\cap\{|\Bx-\By|\leq \epsilon\}}
\frac{1}{|\Bx-\By|^{n+ps-p}}d\Bx d\By\\ \nonumber
&\leq \frac{C}{\epsilon^p}\int_{\Omega-\bar \Omega_{4\epsilon}}\int_{|\Bz|\leq \epsilon}\frac{1}{|\Bz|^{n+ps-p}}d\Bz d\Bx\\ \nonumber
&=\frac{C}{\epsilon^p}\epsilon^{p-ps}|\Omega-\bar \Omega_{4\epsilon}|\\ \nonumber
&\leq C\epsilon^{1-ps}.
\end{align}
On the other hand for $I_2^2$ we have 
\begin{align}
\label{3.15}
I_2^2&=C\int_{\Omega-\bar \Omega_{4\epsilon}}\int_{(\Omega-\bar \Omega_{4\epsilon})\cap\{|\Bx-\By|>\epsilon\}}\frac{1}{|\Bx-\By|^{n+ps}}d\Bx d\By\\ \nonumber
&\leq \int_{\Omega-\bar \Omega_{4\epsilon}}\int_{|\Bz|>\epsilon}\frac{1}{|\Bz|^{n+ps}}d\Bz d\Bx\\ \nonumber
&=C\epsilon^{-ps}|\Omega-\bar \Omega_{4\epsilon}|\\ \nonumber
&\leq C\epsilon^{1-ps}.
\end{align}
Finally combining (\ref{3.11})-(\ref{3.15}) we discover
\begin{equation}
\label{3.16}
[\Bu^\epsilon]_{\mathcal{X}^{s,p}(\Omega)}\leq C\epsilon^{1-ps}.
\end{equation}
Putting together (\ref{3.10}) and (\ref{3.16}) we obtain due to the fact $ps<1,$
\begin{equation}
\label{3.17}
\frac{[\Bu^\epsilon]_{\mathcal{X}^{s,p}(\Omega)}}{[\Bu^\epsilon]_{W^{s,p}(\Omega)}}+
\frac{[\Bu^\epsilon]_{\mathcal{X}^{s,p}(\Omega)}}{\|\Bu^\epsilon\|_{L^p(\Omega)}}\leq C\epsilon^{1-ps}\to 0\quad \text{as}\quad \epsilon\to 0.
\end{equation}
This completes the proof of part (ii).
 
\end{proof}

\begin{proof}[Proof of Theorem \ref{th:2.2}] The proof is obtained as a direct consequence of Theorem~\ref{th:2.1} and Theorem~\ref{th:3.1}. Indeed, we have 
for any $\Bx\in\Omega$ and $\By\in \Omega^c$ the bound $|\Bx-\By|\geq \mathrm{dist}(\Bx,\partial\Omega)=d_\Bx,$ thus we can calculate
\begin{align}
\label{3.18}
[\bar\Bu]_{W^{s,p}(\mathbb R^n)}^p&=[\Bu]_{W^{s,p}(\Omega)}^p+2\int_{\Omega}\int_{\Omega^c}\frac{|\Bu(\Bx)|^p}{|\Bx-\By|^{n+ps}}d\Bx d\By\\ \nonumber
&\leq [\Bu]_{W^{s,p}(\Omega)}^p+2\int_{\Omega}\int_{(B_{d_\Bx}(\Bx))^c}\frac{|\Bu(\Bx)|^p}{|\Bx-\By|^{n+ps}}d\Bx d\By\\ \nonumber
&= [\Bu]_{W^{s,p}(\Omega)}^p+2\int_{\Omega}\frac{|\Bu(\Bx)|^p}{|d_{\Bx}|^{ps}}d\Bx,
\end{align}
consequently Theorem~\ref{th:3.1} implies 
\begin{equation}
\label{3.19}
[\bar\Bu]_{W^{s,p}(\mathbb R^n)}\leq C[\Bu]_{W^{s,p}(\Omega)}.
\end{equation}
Note that a similar calculation and Corollary~\ref{cor:3.3} imply 
\begin{align}
[\bar\Bu]_{\mathcal{X}^{s,p}(\mathbb R^n)}^p&=[\Bu]_{\mathcal{X}^{s,p}(\Omega)}^p
+2\int_{\Omega}\int_{\Omega^c}\frac{|\Bu(\Bx)\cdot(\Bx-\By)|^p}{|\Bx-\By|^{n+ps+p}}d\Bx d\By\\ \nonumber
&\leq [\Bu]_{\mathcal{X}^{s,p}(\Omega)}^p+2\int_{\Omega}\int_{(B_{d_\Bx}(\Bx))^c}\frac{|\Bu(\Bx)|^p}{|\Bx-\By|^{n+ps}}d\Bx d\By\\ \nonumber
&=[\Bu]_{\mathcal{X}^{s,p}(\Omega)}^p+2\int_{\Omega}\frac{|\Bu(\Bx)|^p}{|d_{\Bx}|^{ps}}d\Bx \\ \nonumber
&\leq C[\Bu]_{\mathcal{X}^{s,p}(\Omega)}^p.
 \end{align}

\end{proof}

\section*{Acknowledgements}
We thank the anonymous referee for useful comments and for pointing out some relevant literature that improved the presentation of the manuscript. 
The work of D.H. is supported by the National Science Foundation under Grants No. DMS-1814361.

\end{document}